\title{Factorization of Scalar Piecewise Continuous Almost Periodic Functions}
\author{Liangping Qi\thanks{Chern Institute of Mathematics, Nankai University, Tianjin, PR China. Email: lqi@nankai.edu.cn.}, Rong Yuan\thanks{School of Mathematical Sciences, Beijing Normal University, Beijing, PR China. Email: ryuan@bnu.edu.cn.}}
\date{}
\DeclareMathOperator{\sgn}{sgn}
\newtheorem{theorem}{Theorem}[section]
\newtheorem{definition}[theorem]{Definition}
\newtheorem{lemma}[theorem]{Lemma}
\newtheorem{remark}[theorem]{Remark}
\begin{document}

\maketitle

\begin{abstract}
  This paper is to characterize piecewise continuous almost periodic functions as the product of Bohr almost periodic functions and sequences. As an application, the result is used to discuss piecewise continuous almost periodic solutions of impulsive differential equations.
\end{abstract}

\textbf{Key words:} Piecewise continuous almost periodic functions; impulsive differential equations; factorization.

\textbf{AMS subject classification:} 42A75, 34A37, 34C27.

\section{Introduction}\label{sec:intr}

Impulsive differential equations can model many real processes and phenomena studied in mechanics, radio engineering, communication security, control theory, neural networks, etc. since the states of many evolutionary processes are often subject to instantaneous perturbations and experience abrupt changes at certain moments of time. Piecewise continuous almost periodic (p.c.a.p., for short) functions come from the study of impulsive differential equations and certain problems in real phenomena, see e.g. \cite{ASS10,BS93,HL89,LBS89,SP95,Sta12,Tka16} and \cite{Akh08,BMT08,N'G01,OT06,Zha03} respectively for various works devoted to impulsive differential equations and almost periodic differential equations.

The definition of p.c.a.p. functions is complex and consists of three conditions (\Cref{df:pcap}). The aim of this paper is to present a property of p.c.a.p. functions. This property can be used as not only a good characterization of p.c.a.p. functions, but also a powerful tool in the discussion of p.c.a.p. solutions to impulsive differential equations. The main result is formulated as follows. Notations and terminologies will be explained later.
\begin{theorem}\label{th:factor1}
Suppose that $h\in PCAP(\mathbb{R}, \mathbb{R})$ has discontinuities at the points of a subset of a Wexler sequence $\{\tau_j\}_{j\in \mathbb{Z}}$ and
\begin{equation}\label{eq:factor-h}
  \inf_{n\in \mathbb{Z}} |h(\tau_n)|>0, \quad \inf_{n\in \mathbb{Z}} |h(\tau_n^+)|>0.
\end{equation}
If the difference equation
\begin{equation}\label{eq:factor-c}
  x(n)=\frac{h(\tau_n^+)}{h(\tau_n)}\cdot x(n-1)
\end{equation}
has a bounded solution $v$ with $\inf_{n\in \mathbb{Z}} |v(n)|>0$, then there exists a unique $f\in AP(\mathbb{R}, \mathbb{R})$ and $u\in AP(\mathbb{Z}, \mathbb{R})$, up to a nonzero multiplicative constant, such that
\begin{equation}\label{eq:factor}
  h(t)=f(t)\cdot u(n), \quad \tau_n<t\leq \tau_{n+1}, n\in \mathbb{Z}.
\end{equation}
\end{theorem}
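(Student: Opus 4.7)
The proof follows the natural skeleton dictated by the uniqueness clause. The factorization $h(t)=f(t)u(n)$ forces $u(n)=h(t)/f(t)$ to be constant on each $(\tau_n,\tau_{n+1}]$ once $f$ is fixed, and continuity of $f$ at $t=\tau_{n+1}$ forces exactly the compatibility relation \eqref{eq:factor-c} on $u$. So the plan is to set $u(n):=v(n)$ and \emph{define}
\[
  f(t):=\frac{h(t)}{u(n)},\qquad \tau_n<t\le \tau_{n+1},\ n\in\mathbb{Z},
\]
and then to verify, in order, that (a)~$f$ is continuous and bounded on $\mathbb{R}$, (b)~$u\in AP(\mathbb{Z},\mathbb{R})$, (c)~$f\in AP(\mathbb{R},\mathbb{R})$, and (d)~the factorization is unique up to a nonzero scalar. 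Point~(a) is immediate: the left limit of $f$ at $\tau_n$ equals $h(\tau_n)/u(n-1)$, the right limit equals $h(\tau_n^+)/u(n)$, and these coincide because $v$ solves \eqref{eq:factor-c}; boundedness follows from boundedness of the p.c.a.p.\ function $h$ together with $\inf_n|u(n)|>0$.

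For step~(b), the Wexler hypothesis on $\{\tau_j\}$ together with $h\in PCAP(\mathbb{R},\mathbb{R})$ implies, by the standard sampling argument for p.c.a.p.\ functions, that the two sequences $\{h(\tau_n)\}$ and $\{h(\tau_n^+)\}$ belong to $AP(\mathbb{Z},\mathbb{R})$. Combined with \eqref{eq:factor-h}, the coefficient $a(n):=h(\tau_n^+)/h(\tau_n)$ appearing in \eqref{eq:factor-c} lies in $AP(\mathbb{Z},\mathbb{R})$ with $\inf_n|a(n)|>0$. The solution $v$ is bounded and bounded away from zero, so the classical result on scalar first-order linear difference equations with almost periodic coefficients (a bounded and separated-from-zero solution is itself almost periodic) yields $v\in AP(\mathbb{Z},\mathbb{R})$, hence $u\in AP(\mathbb{Z},\mathbb{R})$.

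Step~(c) is where the real work is. Given $\varepsilon>0$, I choose a common $\varepsilon$-almost-period of the three ingredients $h$, $\{\tau_j\}$, and $u$: an integer $q$ and a real $p$ such that (i)~$\sup_j|\tau_{j+q}-\tau_j-p|<\varepsilon$, (ii)~$|h(t+p)-h(t)|<\varepsilon$ off the union of $\varepsilon$-neighborhoods of the points $\tau_n$ (the PCAP almost-periodicity condition), and (iii)~$\sup_n|u(n+q)-u(n)|<\varepsilon$; such $p$ form a relatively dense set because each ingredient is almost periodic. For $t\in(\tau_n,\tau_{n+1}]$ whose distance to $\{\tau_j\}$ exceeds $\varepsilon$, one has $t+p\in(\tau_{n+q},\tau_{n+q+1}]$, and the estimate
\[
  |f(t+p)-f(t)|\le \frac{|h(t+p)-h(t)|}{|u(n+q)|}+|h(t)|\,\frac{|u(n+q)-u(n)|}{|u(n)|\,|u(n+q)|}
\]
is $O(\varepsilon)$. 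On the remaining small exceptional set near the discontinuity points, uniform continuity of the continuous bounded functions $f$ and $f(\cdot+p)$ absorbs the error, so $f$ has relatively dense Bohr $O(\varepsilon)$-almost-periods and hence $f\in AP(\mathbb{R},\mathbb{R})$.

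For (d), if $h=f_i u_i$ on each $(\tau_n,\tau_{n+1}]$ with $f_i\in AP(\mathbb{R},\mathbb{R})$ and $u_i\in AP(\mathbb{Z},\mathbb{R})$, $i=1,2$, then $f_1/f_2$ is continuous on $\mathbb{R}$ and equals the piecewise-constant value $u_2(n)/u_1(n)$ on each interval; continuity across the $\tau_n$ forces this ratio to be independent of $n$, so $u_2\equiv c\,u_1$ and $f_2\equiv f_1/c$ for some $c\neq0$. The main obstacle is clearly step~(c): the PCAP condition gives almost-periodicity of $h$ only \emph{outside} small neighborhoods of the discontinuities, and one must use the continuity of $f$ proved in (a) to upgrade this local estimate to genuine Bohr almost-periodicity on all of $\mathbb{R}$.
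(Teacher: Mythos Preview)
Your proposal is correct and follows the same route as the paper: set $u=v$, define $f=h/u$ piecewise, verify continuity of $f$ at each $\tau_n$ via the difference relation, deduce $u\in AP(\mathbb{Z},\mathbb{R})$ from the result that a bounded, separated-from-zero solution of a scalar almost periodic linear recursion is itself almost periodic (the paper's \Cref{th:hc-ap}), and then obtain $f\in AP(\mathbb{R},\mathbb{R})$ by a common-almost-period argument that treats the $\varepsilon$-neighborhoods of the $\tau_n$ separately. Your uniqueness argument via the ratio $f_1/f_2$ is a direct variant of the paper's appeal to the one-dimensionality of the solution space of \eqref{eq:factor-c}.

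One point needs tightening. In step~(c) you invoke ``uniform continuity of the continuous bounded functions $f$ and $f(\cdot+p)$'' to absorb the error near the $\tau_n$, but continuity plus boundedness does not imply uniform continuity (consider $\sin(t^2)$). The paper isolates this as its own step: uniform continuity of $f$ is deduced from condition~(ii) in \Cref{df:pcap} for $h$ (uniform equicontinuity on the pieces $(\tau_j,\tau_{j+1}]$) together with the already-proved continuity of $f$ across each $\tau_n$, by splitting the estimate $|f(s)-f(t)|$ into at most two pieces lying in adjacent intervals. With that step inserted, your argument is complete and matches the paper's.
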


\begin{theorem}\label{th:factor2}
Suppose that $f\in AP(\mathbb{R}, \mathbb{R})$, $u\in AP(\mathbb{Z}, \mathbb{R})$ and $\{\tau_j\}_{j\in \mathbb{Z}}$ is a Wexler sequence. Then the function $h$ defined by \eqref{eq:factor} is p.c.a.p.
\end{theorem}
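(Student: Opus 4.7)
The plan is to verify directly the three conditions in the definition of piecewise continuous almost periodic functions. First, by construction $h$ can be discontinuous only at the points $\tau_n$, and on each interval $(\tau_n,\tau_{n+1}]$ it coincides with the continuous function $t\mapsto u(n)f(t)$, so continuity on each piece and global boundedness ($\|h\|_\infty\le\|f\|_\infty\|u\|_\infty$) are automatic. Thus it remains only to produce common almost periods and to control the jump points under translation.

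The core task is to exhibit, for each $\varepsilon>0$, a relatively dense set of common $\varepsilon$-almost periods of $h$ together with matching shifts of the discontinuity set. I would combine the Bohr almost periodicity of $f$, the almost periodicity of the sequence $u$, and the Wexler property of $\{\tau_j\}_{j\in\mathbb Z}$ (which, by definition, makes the family of shifted differences $n\mapsto\tau_{n+p}-\tau_n$ equi-almost-periodic in $p$) through a Bochner-type simultaneous selection argument applied to the product hull. The output sought is: for every $\eta>0$ there exist $\tau\in\mathbb R$, chosen from a relatively dense set, and $p\in\mathbb Z$ such that
\begin{equation*}
\sup_{t\in\mathbb R}|f(t+\tau)-f(t)|<\eta,\quad \sup_{n\in\mathbb Z}|u(n+p)-u(n)|<\eta,\quad \sup_{n\in\mathbb Z}|\tau_{n+p}-\tau_n-\tau|<\eta.
\end{equation*}
Choosing $\eta$ small relative to $\varepsilon$ and to $M:=\max(\|f\|_\infty,\|u\|_\infty)$ then produces the common $\varepsilon$-almost period data required by \Cref{df:pcap}.

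Given such a triple $(\tau,p,\eta)$, fix $t\in(\tau_n,\tau_{n+1}]$ whose distance to $\{\tau_k\}_{k\in\mathbb Z}$ exceeds $\eta$; the third inequality then forces $t+\tau\in(\tau_{n+p},\tau_{n+p+1}]$, so that
\begin{equation*}
h(t+\tau)-h(t)=\bigl(f(t+\tau)-f(t)\bigr)u(n+p)+f(t)\bigl(u(n+p)-u(n)\bigr),
\end{equation*}
and the first two inequalities bound this by $2M\eta$. The third inequality simultaneously delivers the uniform closeness of shifted discontinuity points required in the definition.

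The main obstacle is the simultaneous selection of $(\tau,p)$ meeting all three bounds at once. This is precisely what the Wexler hypothesis on $\{\tau_j\}$ is designed to guarantee: it aligns the additive almost periodicities of $f$ on $\mathbb R$, of $u$ on $\mathbb Z$, and of the shifted-node differences of $\{\tau_j\}$, so that a common relatively dense set of compatible pairs exists. Once this joint almost periodicity is in hand, the remaining verification reduces to the triangle-inequality estimate above, with the technical point at $t$ close to the discontinuity set already absorbed by the standard $\eta$-exclusion clause in the definition of p.c.a.p.
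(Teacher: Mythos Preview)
Your proposal is correct and follows essentially the same route as the paper: both verify \Cref{df:pcap} by invoking the common-almost-period machinery for the Wexler sequence (the paper cites Lemma~35 of \cite{SP95}, you call it a Bochner-type simultaneous selection) to obtain a pair $(r,q)$ with $|f(\cdot+r)-f|<\eta$, $|u(\cdot+q)-u|<\eta$, and $|\tau_j^q-r|<\eta$, and then use the identical decomposition $h(t+r)-h(t)=[f(t+r)-f(t)]u(m+q)+f(t)[u(m+q)-u(m)]$ once the third inequality forces $t+r$ into the correct interval. One small point: your phrase ``continuity on each piece \ldots\ automatic'' should be sharpened to note that condition~(ii) needs a \emph{uniform} $\delta$ across all intervals, which follows because $f$, being Bohr almost periodic, is uniformly continuous on $\mathbb{R}$ (the paper makes exactly this one-line observation); also, condition~(i) is immediate from the Wexler hypothesis itself and needs no further ``control of jump points.''
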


\Cref{th:factor1,th:factor2} give us a good understanding of p.c.a.p. functions by characterizing them as the synthesis of Bohr almost periodic functions and sequences. So it is convenient to generate new p.c.a.p. functions and investigate their properties from Bohr functions and sequences. In addition, \eqref{eq:factor} shows that p.c.a.p. functions have a delicate structure which deserves more study.

As for impulsive differential equations, \Cref{th:factor1,th:factor2} indicate the direction of looking for solutions in the factorization form and provide an answer to the almost periodicity of the functions in an assumption (see (A1) below) of \cite{LY13,LW14,TLWL14,TCQ15,ZLJZ14,ZLY12}. Changing variables is an important tool in dynamics. To simplify the analysis of the following impulsive dynamical system in $\mathbb{R}^d$,
\begin{equation}\label{eq:IDE1}
\begin{cases}
&y_i'=g_i(y_1, \ldots, y_d, t), \quad t\neq \tau_n,\\
&y_i(\tau_n^+)=[1+b_i(n)]y_i(\tau_n), \quad n\in \mathbb{Z}_+, i=1, \ldots, d,
\end{cases}
\end{equation}
where $\{g_i\}_{i=1}^d$ are continuous functions and $0<\tau_1<\tau_2<\cdots$, it is convenient to make the following change of variables
\begin{equation}\label{eq:change1}
\begin{cases}
  &y_i(t)\mapsto \frac{y_i(t)}{\varphi_i(t)}, \quad t>0,\\
  &\varphi_i(t):=\prod_{0<\tau_k<t}[1+b_i(t)], \quad i=1, \ldots, d,
\end{cases}
\end{equation}
which transforms \eqref{eq:IDE1} to the following system
\begin{equation}\label{eq:DE1}
  y_i'=\frac{g_i(\varphi_1(t)y_1, \ldots, \varphi_d(t)y_d, t)}{\varphi_i(t)}, \quad t\neq \tau_n, n\in \mathbb{Z}_+, i=1, \ldots, d,
\end{equation}
so that $(\phi_1, \ldots, \phi_d)$ is a solution to \eqref{eq:IDE1} if and only if $(\phi_1/\varphi_1, \ldots, \phi_d/\varphi_d)$ is a solution to \eqref{eq:DE1}. In order to obtain p.c.a.p. solutions to \eqref{eq:IDE1}, \cite{LY13,LW14,TLWL14,TCQ15,ZLJZ14,ZLY12} make the assumption
\begin{description}
  \item[(A1)] the functions $\{\varphi_i\}_{i=1}^d$ are bounded and p.c.a.p.
\end{description}
However, there exists no theoretical reason for the almost periodicity of the functions in \eqref{eq:change1} in literature. Our \Cref{th:factor1,th:factor2} can solve this problem. To show this, we first need a modification of the functions in \eqref{eq:change1} since they are only defined for $t>0$. For $i=1$, $\ldots$, $d$, let
\begin{equation}\label{eq:seq}
u_i(n):=
\begin{cases}
&\prod_{k=1}^n [1+b_i(k)], \quad n>0,\\
&1, \quad n=1,\\
&\prod_{k=n+1}^0 \frac{1}{1+b_i(k)}, \quad n<0,
\end{cases}
\end{equation}
and
\begin{equation}\label{eq:fct}
\begin{split}
\psi_i(t):&=
\begin{cases}
&\prod_{\tau_0<\tau_k<t} [1+b_i(k)], \quad t\geq \tau_1,\\
&1, \quad \tau_0<t\leq \tau_1,\\
&\prod_{t\leq \tau_k\leq \tau_0} \frac{1}{1+b_i(k)}, \quad t\leq \tau_0,
\end{cases}\\
&=u_i(n), \quad \tau_n<t\leq \tau_{n+1}, n\in \mathbb{Z},
\end{split}
\end{equation}
then $u_i$ is a solution to
\begin{equation}\label{eq:CE1}
  x(n)=[1+b_i(n)]x(n-1), \quad n\in \mathbb{Z},
\end{equation}
and
\begin{equation*}
  \psi_i(\tau_n^+)=[1+b(n)]\psi_i(\tau_n), \quad n\in \mathbb{Z}.
\end{equation*}
Instead of (A1), we propose the assumption
\begin{description}
  \item[(A2)] for $i=1$, $\ldots$, $d$, the sequence $b_i$ is almost periodic, $\inf_{n\in \mathbb{Z}} |1+b_i(n)|>0$, and \eqref{eq:CE1} admits a bounded solution $v_i$ with $\inf_{n\in \mathbb{Z}} |v_i(n)|>0$.
\end{description}
Although (A1) looks simpler than (A2), (A1) is fruition rather than source and has no theoretical basis. (A2) concerns the essence and will be shown crucial in \Cref{th:hc-ap}. Our answer to the almost periodicity of the functions in the transformation
\begin{equation}\label{eq:change2}
  y_i(t)\mapsto \frac{y_i(t)}{\psi_i(t)}, \quad t\in \mathbb{R}, i=1, \ldots, d
\end{equation}
associated with impulsive differential equations reads as follows.
\begin{theorem}\label{th:change}
Suppose that $\{\tau_j\}_{j\in \mathbb{Z}}$ is a Wexler sequence and (A2) holds.
\begin{description}
  \item[(i)] If $(\phi_1, \ldots, \phi_d)$ is a p.c.a.p. solution to
\begin{equation}\label{eq:IDE2}
\begin{cases}
&y_i'=g_i(y_1, \ldots, y_d, t), \quad t\neq \tau_n,\\
&y_i(\tau_n^+)=[1+b_i(n)]y_i(\tau_n), \quad n\in \mathbb{Z}, i=1, \ldots, d,
\end{cases}
\end{equation}
  and $\phi_i(\tau_n)\neq 0$ for all $n\in \mathbb{Z}$ and $i=1$, $\ldots$, $d$, then $(\phi_1/\psi_1, \ldots, \phi_d/\psi_d)$ is a Bohr almost periodic solution to
\begin{equation}\label{eq:DE2}
  y_i'=\frac{g_i(\psi_1(t)y_1, \ldots, \psi_d(t)y_d, t)}{\psi_i(t)}, \quad t\neq \tau_n, n\in \mathbb{Z}, i=1, \ldots, d.
\end{equation}
  \item[(ii)] If $(\phi_1/\psi_1, \ldots, \phi_d/\psi_d)$ is a Bohr almost periodic solution to \eqref{eq:DE2}, then\\
      $(\phi_1, \ldots, \phi_d)$ is a p.c.a.p. solution to \eqref{eq:IDE2}.
\end{description}
\end{theorem}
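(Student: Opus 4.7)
The plan is to reduce both directions to a componentwise application of \Cref{th:factor1,th:factor2}, with the sequence factor pinned down to be the specific $u_i$ of \eqref{eq:seq} via first-order uniqueness for \eqref{eq:CE1}.

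For part (i), fix $i\in\{1,\ldots,d\}$ and set $h:=\phi_i$. The jump relation in \eqref{eq:IDE2} gives $h(\tau_n^+)/h(\tau_n)=1+b_i(n)$, so \eqref{eq:factor-c} for $h$ coincides with \eqref{eq:CE1}, whose bounded solution with $\inf_n|v_i(n)|>0$ is supplied by (A2). The nondegeneracy in \eqref{eq:factor-h} has to be verified: since $\phi_i$ is p.c.a.p., $\{\phi_i(\tau_n)\}_{n\in\mathbb{Z}}$ is an almost periodic sequence, and combined with $\phi_i(\tau_n)\neq 0$ and $\inf_n|1+b_i(n)|>0$ this should yield $\inf_n|\phi_i(\tau_n)|>0$ and $\inf_n|\phi_i(\tau_n^+)|>0$. \Cref{th:factor1} then produces $f_i\in AP(\mathbb{R},\mathbb{R})$ and $\tilde u_i\in AP(\mathbb{Z},\mathbb{R})$ with $\phi_i(t)=f_i(t)\tilde u_i(n)$ on $(\tau_n,\tau_{n+1}]$. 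Reading off this identity at $\tau_{n+1}$ and $\tau_{n+1}^+$, and using continuity of $f_i$, forces $\tilde u_i(n+1)=[1+b_i(n+1)]\tilde u_i(n)$, so $\tilde u_i=c_i u_i$ for some $c_i\neq 0$ by first-order uniqueness. Hence $\phi_i/\psi_i=c_i f_i\in AP(\mathbb{R},\mathbb{R})$. On each open interval $(\tau_n,\tau_{n+1})$ the function $\psi_i$ is constant, so the chain rule gives $(\phi_i/\psi_i)'=\phi_i'/\psi_i=g_i(\phi_1,\ldots,\phi_d,t)/\psi_i=g_i(\psi_1 y_1,\ldots,\psi_d y_d,t)/\psi_i$ with $y_j:=\phi_j/\psi_j$, which is exactly \eqref{eq:DE2}.

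For part (ii), first observe that under (A2) the sequence $u_i$ of \eqref{eq:seq} is bounded, nonvanishing, and solves \eqref{eq:CE1}, hence is a nonzero scalar multiple of $v_i$; its almost periodicity follows from the classical a.p.\ theory for additive equations applied to $\log|u_i(n)|-\log|u_i(n-1)|=\log|1+b_i(n)|$, together with almost periodicity of $\sgn(1+b_i(n))$, which is available because $\inf_n|1+b_i(n)|>0$. Given a Bohr a.p.\ function $\phi_i/\psi_i$ solving \eqref{eq:DE2}, \Cref{th:factor2} applied with $f=\phi_i/\psi_i$ and $u=u_i$ shows that $\phi_i=(\phi_i/\psi_i)\cdot\psi_i$ is p.c.a.p. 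Multiplying \eqref{eq:DE2} by $\psi_i$ on each $(\tau_n,\tau_{n+1})$ recovers $\phi_i'=g_i(\phi_1,\ldots,\phi_d,t)$, and the jump $\phi_i(\tau_n^+)=[1+b_i(n)]\phi_i(\tau_n)$ follows from continuity of $\phi_i/\psi_i$ at $\tau_n$ combined with $\psi_i(\tau_n^+)/\psi_i(\tau_n)=u_i(n)/u_i(n-1)=1+b_i(n)$.

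The main obstacle I foresee is the positivity step in part (i): upgrading $\phi_i(\tau_n)\neq 0$ to $\inf_n|\phi_i(\tau_n)|>0$. For a general bounded almost periodic sequence this implication is not automatic, so the argument must genuinely exploit the p.c.a.p.\ structure of $\phi_i$ as a solution of \eqref{eq:IDE2}, tying $\{\phi_i(\tau_n)\}$ to the flow of the underlying ODE between impulses and to the jump multipliers $1+b_i(n)$ whose infimum is positive by (A2), rather than treating $\{\phi_i(\tau_n)\}$ as an abstract a.p.\ sequence. Once this positivity is established, the remainder of the proof is essentially bookkeeping built on top of \Cref{th:factor1,th:factor2}.
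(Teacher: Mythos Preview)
Your overall architecture matches the paper's: reduce to the componentwise factorization theorems, with $u_i$ pinned down by \Cref{lem:hc-s} and its almost periodicity coming from (A2) via \Cref{th:hc-ap}. Part (ii) in particular is exactly what the paper does (apply \Cref{th:factor2} with $f=\phi_i/\psi_i$ and $u=u_i$).

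The substantive difference is in how the obstacle you flagged in part (i) is handled. You are right that $\phi_i(\tau_n)\neq 0$ together with almost periodicity of $\{\phi_i(\tau_n)\}$ does \emph{not} force $\inf_n|\phi_i(\tau_n)|>0$, and your proposed fix (using the ODE flow between impulses) is unlikely to succeed in this generality: nothing is assumed about $g_i$ beyond continuity, so no quantitative control on $\phi_i(\tau_n)$ is available from the dynamics. The paper's resolution is simpler and bypasses the issue entirely. Inspecting the proof of \Cref{th:factor1}, the only role of hypothesis \eqref{eq:factor-h} is to guarantee that the ratio sequence $\{h(\tau_n^+)/h(\tau_n)\}$ is almost periodic with positive infimum of absolute value; this is recorded as \Cref{re:factor}. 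Here that ratio is exactly $1+b_i(n)$, so (A2) supplies both properties directly, and the conclusion of \Cref{th:factor1} applies to $h=\phi_i$ without ever establishing $\inf_n|\phi_i(\tau_n)|>0$. The uniqueness clause then identifies the Bohr factor with $\phi_i/\psi_i$, as you argued.

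In short: your plan is sound once you replace the attempt to verify \eqref{eq:factor-h} by an appeal to \Cref{re:factor}; the positivity problem you worried about is circumvented, not solved.
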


This paper is organized as follows. In \Cref{sec:pre} we prepare some prerequisite knowledge. In \Cref{sec:hc} we study almost periodic solutions to homogeneous difference equations, which is crucial in proving our main result. In \Cref{sec:factor} we prove \Cref{th:factor1,th:factor2,th:change}.

\section{Preliminaries}\label{sec:pre}

Let $\mathbb{G}=\mathbb{R}$ or $\mathbb{Z}$. A two-sided real sequence is a function $\{u_n\}_{n\in \mathbb{Z}}=\{u(n)\}_{n\in \mathbb{Z}}$ from $\mathbb{Z}$ to $\mathbb{R}$. In the following both notations will be used.

\begin{definition}[{\cite[p. 45]{Cor89}, \cite[p. 183]{SP95}}]\label{df:ap}
A continuous function $f: \mathbb{G}\rightarrow \mathbb{R}$ is called Bohr almost periodic if given any $\epsilon>0$, the $\epsilon$-almost periodic set of $f$,
\begin{equation*}
    T(f, \epsilon):=\{\tau\in\mathbb{G};\ |f(t+\tau)-f(t)|<\epsilon, \forall t\in\mathbb{G}\}
\end{equation*}
is relatively dense, that is, there is a positive number $l=l(\epsilon)\in \mathbb{G}$ satisfying $[a, a+l]\cap T(f, \epsilon)\neq \emptyset$ for all $a\in \mathbb{G}$.
\end{definition}

Denote by $AP(\mathbb{G}, \mathbb{R})$ the set of all Bohr almost periodic functions from $\mathbb{G}$ to $\mathbb{R}$. $AP(\mathbb{G}, \mathbb{R})$ is a Banach space equipped with the norm $\|f\|=\sup_{t\in \mathbb{G}}|f(t)|$.

We call a sequence $\{\tau_j\}_{j\in\mathbb{Z}}\subset\mathbb{R}$ admissible if $\tau_j<\tau_{j+1}$ for all $j\in\mathbb{Z}$ and $\lim_{j\rightarrow\pm\infty}\tau_j=\pm\infty$. Put $\tau_j^k=\tau_{j+k}-\tau_j$, where $j$, $k\in\mathbb{Z}$.

\begin{definition}[{\cite[p. 195]{SP95}}]\label{df:Wexler}
An admissible sequence $\{\tau_j\}_{j\in\mathbb{Z}}$ is called a Wexler sequence if $\inf_{j\in\mathbb{Z}}\tau_j^1>0$ and the family of sequences
\begin{equation*}
  \{\{\tau_j^k\}\}:=\{\{\tau_j^k\}_{j\in\mathbb{Z}}\}_{k\in\mathbb{Z}}
\end{equation*}
is equi-potentially almost periodic, i.e. for each $\epsilon>0$ the common $\epsilon$-almost periodic set of all the sequences $\{\{\tau_j^k\}\}$,
\begin{equation*}
  T(\{\{\tau_j^k\}\}, \epsilon):=\big\{p\in\mathbb{Z}; |\tau_{j+p}^k-\tau_j^k|<\epsilon\text{ for all }j, k\in\mathbb{Z}\big\}
\end{equation*}
is relatively dense.
\end{definition}

Let $PC(\mathbb{R}, \mathbb{R})$ be the set of piecewise continuous functions $h:\mathbb{R}\rightarrow \mathbb{R}$ which have discontinuities of the first kind only at the points of a subset of an admissible sequence $\{\tau_j=\tau_j(h)\}_{j\in\mathbb{Z}}$ and are continuous from the left at $\{\tau_j\}_{j\in\mathbb{Z}}$. Note that we have included continuous functions in $PC(\mathbb{R}, \mathbb{R})$ since the empty set is a subset of all admissible sequences.

\begin{definition}[{\cite[p. 201]{SP95}}]\label{df:pcap}
A function $h\in PC(\mathbb{R}, \mathbb{R})$ is called piecewise continuous almost periodic (p.c.a.p.) if the following conditions hold:
\begin{description}
  \item[(i)] There is an admissible sequence $\{\tau_j\}_{j\in\mathbb{Z}}$ containing possible discontinuities of $h$. The family of sequences $\{\{\tau_j^k\}\}$ is equi-potentially almost periodic.
  \item[(ii)] For each $\epsilon>0$ there exists a $\delta=\delta(\epsilon)>0$ such that $|h(s)-h(t)|<\epsilon$ whenever $s$, $t\in(\tau_j, \tau_{j+1}]$ for some $j\in\mathbb{Z}$ and $|s-t|<\delta$.
  \item[(iii)] For each $\epsilon>0$, the $\epsilon$-almost periodic set of $h$,
\begin{equation*}
\begin{split}
    T(h, \epsilon):=\{&\tau\in\mathbb{R}; |h(t+\tau)-h(t)|<\epsilon \text{ for all } t\in\mathbb{R}\\
    &\text{such that }|t-\tau_j|>\epsilon, j\in\mathbb{Z}\}
\end{split}
\end{equation*}
is relatively dense.
\end{description}
\end{definition}
Denote by $PCAP(\mathbb{R}, \mathbb{R})$ the set of all p.c.a.p. functions. Elementary properties of p.c.a.p. functions are listed below.
\begin{lemma}[{\cite[p. 206, 214]{SP95}}]\label{lem:pcap}
Suppose that $h$, $h_1\in PCAP(\mathbb{R}, \mathbb{R})$ have discontinuities at the points of a subset of the same Wexler sequence $\{\tau_j\}_{j\in\mathbb{Z}}$, then \emph{(i)} $hh_1\in PCAP(\mathbb{R}, \mathbb{R})$; \emph{(ii)} $h/h_1\in PCAP(\mathbb{R}, \mathbb{R})$ if $\inf_{t\in \mathbb{R}} |h_1(t)|>0$; \emph{(iii)} $\{h(\tau_n)\}_{n\in \mathbb{Z}}\in AP(\mathbb{Z}, \mathbb{R})$.
\end{lemma}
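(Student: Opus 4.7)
For each part, I verify the three clauses of Definition \ref{df:pcap}. Clause (i) is immediate since the union of two subsets of $\{\tau_j\}$ is again a subset of $\{\tau_j\}$, and $\{\tau_j\}$ is already assumed to be a Wexler sequence. For clause (ii), every p.c.a.p.\ function is bounded (by the same Bohr-type argument as for $AP$: it is uniformly continuous on each piece and its $\epsilon$-translation set controls behaviour at infinity), so from the uniform control of $h$ and $h_1$ on each $(\tau_j,\tau_{j+1}]$ the product and, when $\inf|h_1|>0$, the quotient inherit the same modulus-of-continuity property. For clause (iii), the standard factorisation
\[
 h(t+\tau)h_1(t+\tau)-h(t)h_1(t)=h(t+\tau)[h_1(t+\tau)-h_1(t)]+h_1(t)[h(t+\tau)-h(t)]
\]
shows that a \emph{common} element of $T(h,\epsilon')\cap T(h_1,\epsilon')$ (with $\epsilon'$ depending on the bounds of $h$ and $h_1$) lies in $T(hh_1,\epsilon)$; analogously $|1/h_1(t+\tau)-1/h_1(t)|\le |h_1(t+\tau)-h_1(t)|/(\inf|h_1|)^2$ handles $1/h_1$ and then (ii) reduces to (i). Relative density of the intersection $T(h,\epsilon')\cap T(h_1,\epsilon')$ is the classical intersection lemma for relatively dense sets, which applies here because both discontinuity sets lie in the \emph{same} Wexler sequence (so the exclusion condition $|t-\tau_j|>\epsilon$ is identical for $h$ and $h_1$).

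\textbf{Plan for (iii).} The aim is to produce, for any $\epsilon>0$, a relatively dense set of integers $p$ with $|h(\tau_{n+p})-h(\tau_n)|<\epsilon$ for every $n\in\mathbb{Z}$. I would choose $\delta=\delta(\epsilon/4)$ from clause (ii) of Definition \ref{df:pcap} (and shrink it so that $\delta<\tfrac12\inf_j\tau_j^1$ and $\delta<\epsilon/4$). By the Wexler property and the p.c.a.p.\ property of $h$, both sets
\[
 T(\{\{\tau_j^k\}\},\delta)\subset\mathbb{Z} \quad\text{and}\quad T(h,\epsilon/4)\subset\mathbb{R}
\]
are relatively dense. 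The classical trick is to pick an integer $p\in T(\{\{\tau_j^k\}\},\delta)$ such that the real number $\eta_p:=\tau_p-\tau_0$ also lies in $T(h,\epsilon/4)$; this is done by taking a common $\delta$-almost period of the family $\{\{\tau_j^k\}\}$ together with the $(\epsilon/4)$-almost period set of $h$, the latter being relatively dense and the map $p\mapsto \eta_p$ having bounded slope $\inf\tau_j^1$ (so large relatively dense subsets of $\mathbb{Z}$ produce relatively dense subsets of $\mathbb{R}$). Given such $p$, the Wexler condition yields $|\tau_{n+p}-\tau_n-\eta_p|<\delta$ for every $n$. Setting $s_n:=\tau_n-\delta$ we have $s_n,\tau_n\in(\tau_{n-1},\tau_n]$ and $s_n+\eta_p,\tau_{n+p}\in(\tau_{n+p-1},\tau_{n+p}]$ (using $\delta<\inf\tau_j^1/2$), and $s_n$ is $>\epsilon/4$-far from every $\tau_j$, so
\[
 |h(\tau_{n+p})-h(\tau_n)|\le |h(\tau_{n+p})-h(s_n+\eta_p)|+|h(s_n+\eta_p)-h(s_n)|+|h(s_n)-h(\tau_n)|,
\]
and each term is $<\epsilon/4$: the outer two by clause (ii) of Definition \ref{df:pcap} applied inside the intervals, and the middle one because $\eta_p\in T(h,\epsilon/4)$ and $s_n$ is sufficiently far from the discontinuities.

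\textbf{Expected obstacle.} The main technical difficulty is the synchronisation step in (iii): extracting an integer $p$ that is simultaneously a $\delta$-almost period of the family $\{\{\tau_j^k\}\}$ \emph{and} whose induced real increment $\eta_p=\tau_p-\tau_0$ is an $(\epsilon/4)$-almost period of the scalar function $h$ in the sense of Definition \ref{df:pcap}. The positive lower density of $\{\tau_j\}$ (from $\inf_j\tau_j^1>0$) and the upper density coming from equi-potential almost periodicity are what make this possible, but stating it cleanly requires the standard Bochner-type intersection argument, and one must keep track of the exclusion condition $|t-\tau_j|>\epsilon$ in $T(h,\epsilon)$ — which forces the substitution $\tau_n\leadsto s_n=\tau_n-\delta$ above. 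The remaining ingredients (boundedness of p.c.a.p.\ functions, the product/quotient estimates, intersection of relatively dense sets) are routine consequences of Definitions \ref{df:ap}--\ref{df:pcap}.
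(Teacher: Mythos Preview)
The paper does not actually prove this lemma: it is stated with a citation to \cite[pp.~206, 214]{SP95} and used as a black box. So there is no ``paper's own proof'' to compare against, and your sketch is already more than the paper provides.

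That said, your argument for part~(iii) has a genuine quantitative inconsistency. You first fix $\delta=\delta(\epsilon/4)$ from clause~(ii) of Definition~\ref{df:pcap} and shrink it so that $\delta<\epsilon/4$; then you set $s_n=\tau_n-\delta$ and assert that $s_n$ is more than $\epsilon/4$ away from every $\tau_j$. But $|s_n-\tau_n|=\delta<\epsilon/4$, so $s_n$ lies \emph{inside} the excluded $\epsilon/4$-neighbourhood of $\tau_n$, and you are not entitled to apply $\eta_p\in T(h,\epsilon/4)$ at the point $s_n$. The remedy is to invert the order of choices: given $\epsilon$, first take $\delta$ from clause~(ii) (with, say, $|h(s)-h(t)|<\epsilon/3$), then set $s_n=\tau_n-\delta/2$, and only afterwards pick the almost-period tolerance $\epsilon'<\delta/4$ and seek $\eta_p\in T(h,\epsilon')$. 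With this ordering all three terms in your triangle inequality are controlled.

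A second, softer gap is the synchronisation step. Your claim that one can find $p\in T(\{\{\tau_j^k\}\},\delta)$ with $\eta_p=\tau_0^p\in T(h,\epsilon')$, justified by ``the map $p\mapsto\eta_p$ having bounded slope'', is not a proof: two relatively dense sets in $\mathbb{R}$ need not intersect, and the image of a relatively dense subset of $\mathbb{Z}$ under $p\mapsto\tau_0^p$ has no reason to meet $T(h,\epsilon')$ without further structure. The clean way---and the one the paper itself invokes later (see the sets $\Gamma_1,Q_1$ around \eqref{eq:factor-4}--\eqref{eq:factor-3})---is the ``method of common almost periods'' (Lemma~35 in \cite{SP95}): one produces relatively dense sets of pairs $(r,q)\in\mathbb{R}\times\mathbb{Z}$ satisfying simultaneously $r\in T(h,\epsilon')$ and $|\tau_j^q-r|<\epsilon'$ for all $j$, then uses $r$ as the real translate and $q$ as the integer shift. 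Once you replace your ad~hoc $\eta_p$ by such an $r$ and fix the constant ordering above, the rest of your outline for (i)--(iii) goes through.
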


\section{Homogeneous difference equations}\label{sec:hc}

To prove \Cref{th:factor1,th:factor2} we need some results on difference equations. We shall give an equivalent condition on the existence of almost periodic solutions to the following difference equation in $\mathbb{R}$,
\begin{equation}\label{eq:hc}
  x(n)=a(n)x(n-1), \quad n\in \mathbb{Z},
\end{equation}
where $a(n)\neq 0$. It is easy to check that for each $x_0\in \mathbb{R}$ the initial value problem
\begin{equation}\label{eq:hc-ivp}
\begin{cases}
x(n)=a(n)x(n-1), \quad n\in \mathbb{Z},\\
x(n_0)=x_0
\end{cases}
\end{equation}
admits on $\mathbb{Z}$ a unique solution $\phi$ given by
\begin{equation}\label{eq:hc-s}
\phi(n)=
\begin{cases}
\big[\prod_{k=n_0+1}^na(k)\big]\cdot x_0, \quad n>n_0,\\
x_0, \quad n=n_0,\\
\big[\prod_{k=n+1}^{n_0}\frac{1}{a(k)}\big]\cdot x_0, \quad n<n_0,
\end{cases}
\end{equation}
which is trivial (identically zero) if and only if it attains zero value at any point. Define a function, which is essentially the Cauchy matrix of \eqref{eq:hc}, by
\begin{equation}\label{eq:hc-V}
V(n, m)=
\begin{cases}
\prod_{k=m+1}^na(k), \quad n>m,\\
1, \quad n=m,\\
\prod_{k=n+1}^m\frac{1}{a(k)}, \quad n<m,
\end{cases}
\end{equation}
where $m$, $n\in \mathbb{Z}$. It is easy to see that
\begin{align*}
V(n, m)&=a(n)\cdot V(n-1, m)=V(n, m-1)\cdot \frac{1}{a(m)}\\
&=\frac{1}{V(m, n)}=V(n, l)V(l, m)
\end{align*}
for all $l$, $m$, $n\in \mathbb{Z}$. Therefore, $\phi(n)=V(n, n_0)\cdot x_0$.

\begin{lemma}\label{lem:hc-s}
Suppose that $u$ and $v$ are two nontrivial (not identically zero) solutions to \eqref{eq:hc}, then there exists a constant $c\in \mathbb{R}$, $c\neq 0$ such that $u=cv$.
\end{lemma}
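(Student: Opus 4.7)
The plan is to exploit the explicit formula \eqref{eq:hc-s} together with the observation, already recorded in the text, that a solution of \eqref{eq:hc} is trivial if and only if it vanishes at a single point. Since $v$ is nontrivial, $v(n) \neq 0$ for every $n \in \mathbb{Z}$, and the same holds for $u$.

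First I would fix any base point, say $n_0 = 0$, and set
\[
c := \frac{u(0)}{v(0)},
\]
which is well-defined and nonzero because $v(0) \neq 0$ and $u(0) \neq 0$. Then I would consider the sequence $cv$, which is again a solution of \eqref{eq:hc} by linearity (multiplying both sides of $v(n) = a(n)v(n-1)$ by $c$), and which satisfies $(cv)(0) = u(0)$. Hence both $u$ and $cv$ solve the initial value problem \eqref{eq:hc-ivp} with $n_0 = 0$ and $x_0 = u(0)$.

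By the uniqueness statement established for \eqref{eq:hc-ivp} (via the explicit representation $\phi(n) = V(n,n_0) x_0$), we conclude $u(n) = c v(n)$ for all $n \in \mathbb{Z}$, with $c \neq 0$. Alternatively, one can give a direct verification: the ratio $w(n) := u(n)/v(n)$ is well-defined and satisfies
\[
w(n) = \frac{u(n)}{v(n)} = \frac{a(n) u(n-1)}{a(n) v(n-1)} = w(n-1),
\]
so $w$ is a constant $c$, and $c \neq 0$ because $u$ is not identically zero.

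There is no real obstacle here; the only point that needs to be recorded is the dichotomy that a nontrivial solution is nowhere zero, which is immediate from \eqref{eq:hc-s} since $a(k) \neq 0$ for all $k$. Either the uniqueness argument or the constant-ratio argument yields the statement in one line once that observation is in place.
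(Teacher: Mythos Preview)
Your proof is correct. The paper does not give an explicit proof of this lemma; it is stated immediately after the explicit solution formula \eqref{eq:hc-s} and the remark that a solution is trivial iff it vanishes at a single point, and is evidently meant to follow directly from these facts---which is exactly what you do.
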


We are interested in almost periodic solutions to \eqref{eq:hc}. The following lemma provides a necessary condition.
\begin{lemma}\label{lem:1}
Suppose that $a\in AP(\mathbb{Z}, \mathbb{R})$ and $0<\vartheta:=\inf_{n\in \mathbb{Z}}|a(n)|<\|a\|$. If \eqref{eq:hc} has an almost periodic solution, then $\vartheta<1<\|a\|$.
\end{lemma}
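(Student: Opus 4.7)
The plan is a proof by contradiction. Assuming that $x$ is a nontrivial almost periodic solution of \eqref{eq:hc}, I would first record two structural consequences: because $a(n)\neq 0$, the explicit formula \eqref{eq:hc-s} forces $x(n)\neq 0$ for every $n\in\mathbb{Z}$; and because $x\in AP(\mathbb{Z},\mathbb{R})$ it is bounded. Then suppose the conclusion fails, so either $\vartheta\geq 1$ or $\|a\|\leq 1$, and I would handle these two cases separately.

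In the case $\vartheta\geq 1$, the recursion gives $|x(n)|=|a(n)||x(n-1)|\geq |x(n-1)|$, so $|x|$ is nondecreasing. Using $\|a\|>\vartheta\geq 1$, fix $n^{*}\in\mathbb{Z}$ and $\eta>0$ with $|a(n^{*})|>1+2\eta$. Since $a$ is almost periodic, $T(a,\eta)$ is relatively dense, hence contains a strictly increasing sequence $p_1<p_2<\cdots$ tending to $+\infty$, and $|a(n^{*}+p_j)|>1+\eta$ for every $j$. Combining the factor $1+\eta$ picked up at each index $n^{*}+p_j$ with $|x|$ being nondecreasing elsewhere yields $|x(n^{*}+p_k)|\geq (1+\eta)^{k}|x(n^{*})|$, contradicting boundedness.

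The case $\|a\|\leq 1$ is symmetric. Now $|x|$ is nonincreasing, and $\vartheta<\|a\|\leq 1$ allows me to fix $n^{*}$ and $\eta>0$ with $|a(n^{*})|<1-2\eta$; the same translate argument, applied to a relatively dense sequence $p_k\to +\infty$ in $T(a,\eta)$, gives $|x(n^{*}+p_k)|\leq (1-\eta)^{k}|x(n^{*})|$, so $|x(n)|\to 0$ as $n\to +\infty$. To contradict this, I would invoke the recurrence of nontrivial almost periodic functions: fix $n_0$ with $|x(n_0)|=:c>0$, then the relatively dense set $T(x,c/2)$ yields arbitrarily large positive integers $p$ with $|x(n_0+p)|>c/2$, which is incompatible with $|x(n)|\to 0$.

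The main obstacle is modest but deserves care: I must be precise about how the geometric jumps at the relatively dense "good" indices compound with the monotone behavior at the other indices to produce the strict geometric bound on $|x(n^{*}+p_k)|$. Once that bookkeeping is recorded the two contradictions are immediate, and $\vartheta<1<\|a\|$ follows.
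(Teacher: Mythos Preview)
Your argument is correct. Both cases are handled cleanly: in the case $\vartheta\geq 1$ you combine the global monotonicity of $|x|$ with infinitely many strict ``jumps'' of size $1+\eta$ at the translates $n^{*}+p_j$ (obtained from the almost periodicity of $a$) to force $|x|$ to blow up, contradicting boundedness; in the case $\|a\|\leq 1$ the symmetric argument gives $|x(n)|\to 0$ along $\mathbb{Z}_+$, which you then contradict using the recurrence of the nontrivial almost periodic solution $x$. The bookkeeping you flag (that the strictly increasing $p_j\geq 1$ ensure $n^{*}+p_{j}-1\geq n^{*}+p_{j-1}$, so the monotone estimate and the multiplicative gain compose correctly) is straightforward.

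The paper reaches the same contradiction by a somewhat different route. Rather than iterating the almost periodicity of the \emph{coefficient} $a$ to accumulate geometric growth or decay on a single half-line, it exploits both half-lines at once: when $\|a\|\leq 1$ one has $|u(n)|\leq |u(0)|$ for $n\geq 0$ and $|u(n)|\geq |u(0)|$ for $n\leq 0$, picks a single index $l>0$ with $|u(l)|<|u(0)|$, and then uses the almost periodicity of the \emph{solution} $u$ to translate $l$ far into the negative half-line, producing the clash $|u(0)|\leq |u(p+l)|<|u(l)|+\epsilon<|u(0)|$. So the paper leans on $T(u,\epsilon)$ and a single strict step, whereas you lean on $T(a,\eta)$ and infinitely many strict steps. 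The paper's version is a bit shorter; your version is more quantitative (it actually exhibits a geometric rate) and avoids the small implicit step of justifying why a strict index $l>0$ exists.
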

\begin{proof}
Assume the contrary that $\|a\|\leq 1$. It follows that
\begin{equation*}
  |a(n)|\leq 1, \frac{1}{|a(n)|}\geq 1, \quad n\in \mathbb{Z}.
\end{equation*}
If $u\in AP(\mathbb{Z}, \mathbb{R})$ is a solution to \eqref{eq:hc}, then
\begin{align*}
&|u(n)|\leq |u(0)|, \quad n\in \mathbb{Z}_+,\\
&|u(n)|\geq |u(0)|, \quad n\in \mathbb{Z}_-.
\end{align*}
Since $\vartheta<\|a\|$, \eqref{eq:hc} admits no constant solution. Therefore, $u$ is not constant on $\mathbb{Z}$, and there is an $l\in \mathbb{Z}_+$ such that $|u(l)|<|u(0)|$. Let $\epsilon$ be a number satisfying $0<\epsilon<|u(0)|-|u(l)|$. The set $T(u, \epsilon)$ is relatively dense from the almost periodicity of $u$. Let $p\in T(u, \epsilon)$, $p+l<0$. A direct calculation implies the contradiction
\begin{equation*}
  |u(0)|\leq |u(p+l)|<|u(l)|+\epsilon<|u(0)|.
\end{equation*}

The proof for the case of $\vartheta\geq 1$ is similar. So we omit it.
\end{proof}

The following lemma rules out the possibility that $\inf_{n\in \mathbb{Z}} |u(n)|=0$ for a desired solution $u$.
\begin{lemma}\label{lem:notAA}
Suppose that $a\in AP(\mathbb{Z}, \mathbb{R})$, $\inf_{n\in \mathbb{Z}} |a(n)|=\vartheta>0$, and $u$ is a nontrivial bounded solution to \eqref{eq:hc}, $\inf_{n\in \mathbb{Z}} |u(n)|=0$. Then $u\notin AA(\mathbb{Z}, \mathbb{R})$, the space of real almost automorphic sequences $v$ such that from any sequence $\{n_k'\}_{k=1}^\infty\subset \mathbb{Z}$ we can extract a subsequence $\{n_k\}_{k=1}^\infty\subset \{n_k'\}_{k=1}^\infty$ with
\begin{equation*}
  \lim_{k\rightarrow \infty} v(n+n_k)=w(n), \lim_{k\rightarrow \infty} w(n-n_k)=v(n), \quad n\in \mathbb{Z},
\end{equation*}
for some sequence $w$.
\end{lemma}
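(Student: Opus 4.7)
The plan is a proof by contradiction: assume $u\in AA(\mathbb{Z},\mathbb{R})$ and use the almost automorphic structure to produce a nonzero solution of \eqref{eq:hc} that vanishes at a point, which is impossible since $a(n)\neq 0$ forces the zero set of any solution to be either empty or all of $\mathbb{Z}$.

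First, because $\inf_{n\in\mathbb{Z}}|u(n)|=0$, I would pick a sequence $\{n_k'\}\subset\mathbb{Z}$ with $u(n_k')\to 0$. Applying the definition of almost automorphy to $\{n_k'\}$, I extract a subsequence $\{n_k\}$ and a sequence $w$ with $u(n+n_k)\to w(n)$ and $w(n-n_k)\to u(n)$ for every $n\in\mathbb{Z}$. Since Bohr almost periodicity implies almost automorphy, the sequence $a\in AP(\mathbb{Z},\mathbb{R})$ is almost automorphic as well, so I can pass to a further subsequence (still denoted $\{n_k\}$) along which $a(n+n_k)\to \tilde a(n)$ pointwise. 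The pointwise limits defining $w$ and the convergence $u(n_k)\to 0$ are preserved under this further extraction.

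Now I pass to the limit in the translated recurrence $u(n+n_k)=a(n+n_k)\,u(n-1+n_k)$ to obtain
\begin{equation*}
w(n)=\tilde a(n)\,w(n-1), \quad n\in\mathbb{Z}.
\end{equation*}
Since $|a(n)|\geq\vartheta>0$ for every $n$, the pointwise limit satisfies $|\tilde a(n)|\geq\vartheta>0$, so in particular $\tilde a(n)\neq 0$ for all $n$. Setting $n=0$ gives $w(0)=\lim_{k\to\infty}u(n_k)=0$. The relation $w(n)=\tilde a(n)w(n-1)$ together with $\tilde a(n)\neq 0$ then forces $w(n)=0$ for all $n\geq 0$ by induction, and solving backwards gives $w(n)=0$ for all $n\leq 0$ as well. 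Hence $w\equiv 0$.

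Finally, the second half of the almost automorphy definition yields $u(n)=\lim_{k\to\infty}w(n-n_k)=0$ for every $n\in\mathbb{Z}$, contradicting the assumption that $u$ is nontrivial. The only delicate point is the bookkeeping of two successive subsequence extractions; once we know that $a$, being Bohr almost periodic, is automatically almost automorphic, the rest is a routine limit-passage in the linear recurrence combined with the elementary observation that a solution of \eqref{eq:hc} with nonvanishing coefficient cannot vanish at a single point without vanishing identically.
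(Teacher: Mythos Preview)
Your argument is correct and follows essentially the same route as the paper's proof: pick $\{n_k'\}$ with $u(n_k')\to 0$, extract a subsequence along which the translates of $u$ and of $a$ converge, pass to the limit in the recurrence to obtain a solution of a limit equation with nonvanishing coefficient that vanishes at $0$, conclude the limit sequence is identically zero, and then observe that the backward limit cannot recover the nontrivial $u$. The only cosmetic difference is that you phrase this as a proof by contradiction (assuming $u\in AA$ and invoking the definition to obtain $w$), whereas the paper argues directly by using boundedness of $u$ and a diagonal extraction to produce the limit sequence $\hat u$, and then notes that the second convergence in the definition of almost automorphy fails; the substance of the two arguments is the same.
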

\begin{proof}
Since $u$ is nontrivial, $u(n)\neq 0$ for all $n\in \mathbb{Z}$. Let $\{n_k'\}_{k=1}^\infty\subset \mathbb{Z}$ be a sequence such that $\lim_{k\rightarrow \infty} u(n_k')=0$. Using the diagonal procedure, it follows from the boundedness of $u$ that there exists a subsequence $\{n_k\}_{k=1}^\infty\subset \{n_k'\}_{k=1}^\infty$ with $\lim_{k\rightarrow \infty} u(n+n_k)=\hat{u}(n)$ for all $n\in \mathbb{Z}$ and some sequence $\hat{u}$. In particular, $\hat{u}(0)=0$ by assumption. Because $a\in AP(\mathbb{Z}, \mathbb{R})$, we may assume further that $\lim_{k\rightarrow \infty} a(n+n_k)=\hat{a}(n)$ uniformly for all $n\in \mathbb{Z}$ and some $\hat{a}\in AP(\mathbb{Z}, \mathbb{R})$. Consequently, the equality
\begin{equation*}
  u(n+n_k)=a(n+n_k)u(n+n_k-1)
\end{equation*}
implies
\begin{equation*}
  \hat{u}(n)=\hat{a}(n)\hat{u}(n-1)
\end{equation*}
for all $n\in \mathbb{Z}$. Moreover, from $\vartheta\leq |a(n+n_k)|\leq \|a\|$ it follows that $\vartheta\leq |\hat{a}(n)|\leq \|a\|$ for all $n\in \mathbb{Z}$. So, $\hat{u}(n)=0$ for all $n\in \mathbb{Z}$. Hence $\lim_{k\rightarrow \infty} \hat{u}(n-n_k)=0\neq u(n)$ for all $n\in \mathbb{Z}$. Thus $u\notin AA(\mathbb{Z}, \mathbb{R})$.
\end{proof}

\begin{remark}\label{re:notAA}
Lemma 4.7 in \cite{Yua10} proves that an almost periodic solution $\phi$ to $x(n+1)=C(n)x(n)$, $n\in \mathbb{Z}$, where $C\in AP(\mathbb{Z}, \mathbb{R}^d)$, $d\in \mathbb{Z}_+$, satisfies either $\inf_{n\in \mathbb{Z}} |\phi(n)|>0$ or $\phi(n)=0$ for all $n\in \mathbb{Z}$. Our lemma shows that such a nontrivial bounded solution $u$ with $\inf_{n\in \mathbb{Z}} |u(n)|=0$ is even not almost automorphic.
\end{remark}

Consider auxiliary equations
\begin{align}
x(n)&=|a(n)|\cdot x(n-1),\label{eq:hc+}\\
x(n)&=[\sgn a(n)]\cdot x(n-1),\label{eq:hcsgn}
\end{align}
of which the Cauchy matrices are respectively $|V(n, m)|$ and $\sgn V(n, m)$, where $m$, $n\in \mathbb{Z}$. It is easy to prove the following result.

\begin{lemma}\label{lem:hc-aux}
The following statements are true.
\begin{description}
  \item[(i)] If $u$ is a solution to \eqref{eq:hc}, then $\{|u(n)|\}_{n\in \mathbb{Z}}$ is a solution to \eqref{eq:hc+}.
  \item[(ii)] If $v$ is a solution to \eqref{eq:hc+}, then $\{v(n)\cdot \sgn V(n, m)\}_{n\in \mathbb{Z}}$ is a solution to \eqref{eq:hc} for each $m\in \mathbb{Z}$.
  \item[(iii)] If $u$, $v$, $w$ are respectively nontrivial solutions to \eqref{eq:hc}, \eqref{eq:hc+}, \eqref{eq:hcsgn}, then $v/w=v\cdot w$, $u/w=u\cdot w$, $u/v=v/u$ are respectively solutions to \eqref{eq:hc}, \eqref{eq:hc+}, \eqref{eq:hcsgn}.
  \item[(iv)] $u$ is a solution, bounded or with $\inf_{n\in \mathbb{Z}} |u(n)|>0$, to \eqref{eq:hc}, if and only if $v$ is a solution, bounded or with $\inf_{n\in \mathbb{Z}} |v(n)|>0$, to \eqref{eq:hc+}, where $v(n)=|u(n)|$, $u(n)=v(n)\cdot [\sgn V(n, m)]\cdot u(m)/|u(m)|=v(n)\cdot [\sgn V(n, m)u(m)]$.
  \item[(v)] If $u$ is an almost periodic solution to \eqref{eq:hc}, then $\{|u(n)|\}_{n\in \mathbb{Z}}$ is an almost periodic solution to \eqref{eq:hc+}. If in addition, $\inf_{n\in \mathbb{Z}} |u(n)|>0$, then $\{\sgn u(n)\}_{n\in \mathbb{Z}}$ is an almost periodic solution to \eqref{eq:hcsgn}.
\end{description}
\end{lemma}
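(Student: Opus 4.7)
The plan is to verify each of the five items by directly unwinding the recursion \eqref{eq:hc} and the explicit Cauchy-matrix formula \eqref{eq:hc-V}; nothing deep is required. First I would dispatch (i) by applying $|\cdot|$ to both sides of \eqref{eq:hc}. For (ii), the key identity is $\sgn V(n,m)=\sgn a(n)\cdot\sgn V(n-1,m)$, obtained from $V(n,m)=a(n)V(n-1,m)$; setting $w(n):=v(n)\,\sgn V(n,m)$ and using $a(n)=|a(n)|\sgn a(n)$ then gives $w(n)=a(n)w(n-1)$ in one line.

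For (iii) and (iv) I would proceed by bookkeeping. Because $[\sgn a(n)]^2\equiv 1$, any nontrivial solution $w$ of \eqref{eq:hcsgn} has $|w|$ constant; rescaling $w$ to take values in $\{-1,+1\}$ is harmless by \Cref{lem:hc-s}, and then $1/w=w$, after which each of $v/w$, $u/w$, $u/v$ can be checked against the corresponding equation using the factorization $a=|a|\sgn a$. For (iv), item (i) supplies the forward map $u\mapsto v:=|u|$; conversely, given a nontrivial $v$ solving \eqref{eq:hc+}, (ii) yields the solution $v(n)\sgn V(n,m)$ of \eqref{eq:hc}, which after multiplication by the constant $u(m)/|u(m)|$ (well-defined since $u$ is nontrivial) coincides with $u$. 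Since $|u(n)|=v(n)$ under this correspondence, the boundedness and lower-bound conditions transfer verbatim.

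Finally for (v): almost periodicity of $\{|u(n)|\}$ is immediate from the reverse triangle inequality $\bigl||u(n+\tau)|-|u(n)|\bigr|\leq|u(n+\tau)-u(n)|$, which yields the inclusion $T(u,\epsilon)\subset T(|u|,\epsilon)$. When additionally $d:=\inf_{n\in\mathbb{Z}}|u(n)|>0$, any $\tau\in T(u,d)$ satisfies $|u(n+\tau)-u(n)|<d\leq|u(n)|$ for every $n$, which rules out a sign change and forces $\sgn u(n+\tau)=\sgn u(n)$; hence $T(u,d)\subset T(\sgn u,\epsilon)$ for every $\epsilon>0$, giving almost periodicity of $\sgn u$. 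That $\sgn u$ solves \eqref{eq:hcsgn} follows by applying $\sgn$ to both sides of \eqref{eq:hc} and using multiplicativity of $\sgn$ on $\mathbb{R}\setminus\{0\}$. I foresee no real obstacle; the only mild subtlety is the $\pm 1$-normalization in (iii), which is absorbed by \Cref{lem:hc-s}.
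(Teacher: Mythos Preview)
Your proposal is correct and complete; the paper itself omits the proof entirely, stating only that ``It is easy to prove the following result,'' so your argument is exactly the kind of routine verification the authors had in mind. The only cosmetic point is that in (iii) the equality $u/v=v/u$ in the statement should be read as ``both $u/v$ and $v/u$ solve \eqref{eq:hcsgn}'' rather than literal equality, which your treatment handles correctly.
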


One may expect integer-valued almost periodic sequences to be periodic.
\begin{lemma}\label{lem:hcsgn-s}
Suppose that $a\in AP(\mathbb{Z}, \mathbb{R})$ and $\inf_{n\in \mathbb{Z}}|a(n)|=\vartheta>0$, then all solutions to \eqref{eq:hcsgn} are $|2p|$-periodic, where $p\in T(a, \epsilon)$, $p\neq 0$ and $0<\epsilon<2\vartheta$.
\end{lemma}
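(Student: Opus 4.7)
The plan is to reduce everything to showing that $\sgn a$ itself is $|p|$-periodic, from which $|2p|$-periodicity of any solution will fall out of the Cauchy-matrix formula \eqref{eq:hc-V}. Without loss of generality I will assume $p>0$; the case $p<0$ is handled by the fact that $T(a,\epsilon)\subset\mathbb{Z}$ is symmetric, so $-p\in T(a,\epsilon)$ and the conclusion is the same.

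\textbf{Step 1: $\sgn a$ is $p$-periodic.} For every $n\in\mathbb{Z}$, the choice $p\in T(a,\epsilon)$ with $0<\epsilon<2\vartheta$ gives
\begin{equation*}
|a(n+p)-a(n)|<\epsilon<2\vartheta\leq |a(n)|+|a(n+p)|.
\end{equation*}
If $a(n)$ and $a(n+p)$ had opposite signs, then $|a(n+p)-a(n)|=|a(n+p)|+|a(n)|$, contradicting the display above. Hence $\sgn a(n+p)=\sgn a(n)$ for every $n$.

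\textbf{Step 2: The sign-product over any $p$ consecutive integers is constant in $n$.} By \eqref{eq:hc-V} applied to \eqref{eq:hcsgn}, every solution $x$ satisfies $x(n+2p)=V(n+2p,n)\,x(n)$ with
\begin{equation*}
V(n+2p,n)=\prod_{k=n+1}^{n+2p}\sgn a(k)=\Bigl(\prod_{k=n+1}^{n+p}\sgn a(k)\Bigr)\cdot\Bigl(\prod_{k=n+p+1}^{n+2p}\sgn a(k)\Bigr).
\end{equation*}
Re-indexing the second factor by $k\mapsto k+p$ and applying Step 1 makes the two factors equal, so
\begin{equation*}
V(n+2p,n)=\Bigl(\prod_{k=n+1}^{n+p}\sgn a(k)\Bigr)^{\!2}=1,
\end{equation*}
since each $\sgn a(k)\in\{\pm 1\}$. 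Therefore $x(n+2p)=x(n)$ for all $n\in\mathbb{Z}$, i.e.\ every solution is $|2p|$-periodic.

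The only step that requires any thought is Step 1, and even there the obstruction is mild: the inequality $\epsilon<2\vartheta$ is precisely what rules out a sign flip under an $\epsilon$-translate. Once the sign function inherits the period $p$ from $a$, the fact that solutions must be $2p$-periodic (rather than merely $p$-periodic) is forced because each $\sgn a(k)=\pm 1$ only squares to $1$; nothing more sophisticated than the Cauchy-matrix identity \eqref{eq:hc-V} is needed.
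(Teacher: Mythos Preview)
Your proof is correct and follows essentially the same route as the paper: first use $\epsilon<2\vartheta$ to force $\sgn a(n+p)=\sgn a(n)$, then pair up the factors in the Cauchy-matrix product over $2p$ steps to see that it equals a square of $\pm1$'s, hence $1$. The only cosmetic difference is that the paper works with $\sgn V(n+2p,n)$ for the original $V$ of \eqref{eq:hc} (and treats the cases $p>0$, $p<0$ explicitly), whereas you apply the Cauchy-matrix formula directly to \eqref{eq:hcsgn} and dispose of $p<0$ by symmetry of $T(a,\epsilon)$.
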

\begin{proof}
It suffices to consider nontrivial solutions $u$ of \eqref{eq:hcsgn}. Let $\epsilon$, $0<\epsilon<2\vartheta$, be given. For each $p\in T(a, \epsilon)$ and $n\in \mathbb{Z}$, the inequality
\begin{equation*}
  |a(n+p)-a(n)|<\epsilon<2\vartheta
\end{equation*}
implies $\sgn a(n+p)=\sgn a(n)$. Consequently, from the equalities
\begin{equation*}
V(n+2p, n)=
\begin{cases}
\prod_{k=n+1}^{n+2p} a(k)=\prod_{k=n+1}^{n+p} a(k)a(k+p), \quad p>0,\\
1, \qquad p=0,\\
\prod_{k=n}^{n+2p+1} \frac{1}{a(k)}=\prod_{k=n}^{n+p+1} \frac{1}{a(k)a(k+p)}, \quad p>0,
\end{cases}
\end{equation*}
it follows that $\sgn V(n+2p, n)=1$ for all $n\in \mathbb{Z}$. Therefore,
\begin{equation*}
  u(n+2p)=[\sgn V(n+2p, n)]\cdot u(n)=u(n)
\end{equation*}
for all $n\in \mathbb{Z}$.
\end{proof}

We are in the position giving an equivalent condition for the existence of almost periodic solutions to \eqref{eq:hc}.
\begin{theorem}\label{th:hc-ap}
Suppose that $a\in AP(\mathbb{Z}, \mathbb{R})$ and $\inf_{n\in \mathbb{Z}}|a(n)|=\vartheta>0$, then a nontrivial solution $u$ to \eqref{eq:hc} is almost periodic if and only if $u$ is bounded on $\mathbb{Z}$ and $\inf_{n\in \mathbb{Z}} |u(n)|>0$.
\end{theorem}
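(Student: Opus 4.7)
The plan is to treat the two implications separately and, for the harder reverse direction, to reduce to the positive scalar case via the sign/absolute-value decomposition provided by Lemma~\ref{lem:hc-aux}, then invoke a discrete Bohl--Bohr theorem after passing to logarithms.

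The forward implication is short. Any $u\in AP(\mathbb{Z},\mathbb{R})$ is automatically bounded, so only $\inf_{n\in\mathbb{Z}}|u(n)|>0$ requires argument. Suppose to the contrary that $\inf_{n\in\mathbb{Z}}|u(n)|=0$. Then $u$ is a nontrivial bounded solution to \eqref{eq:hc} with vanishing infimum, and Lemma~\ref{lem:notAA} yields $u\notin AA(\mathbb{Z},\mathbb{R})$. Since $AP(\mathbb{Z},\mathbb{R})\subset AA(\mathbb{Z},\mathbb{R})$, this contradicts the almost periodicity of $u$.

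For the reverse direction, I assume $u$ is bounded with $\inf_{n\in\mathbb{Z}}|u(n)|>0$. Nontriviality together with $a(n)\neq 0$ and the explicit formula \eqref{eq:hc-s} give $u(n)\neq 0$ for every $n\in\mathbb{Z}$, so the factorization $u(n)=|u(n)|\cdot\sgn u(n)$ is valid pointwise. I will show separately that $|u|\in AP(\mathbb{Z},\mathbb{R})$ and $\sgn u\in AP(\mathbb{Z},\mathbb{R})$; the product of two bounded almost periodic sequences then lies in $AP(\mathbb{Z},\mathbb{R})$, finishing the proof. For the sign, the identity $\sgn u(n)=\sgn a(n)\cdot \sgn u(n-1)$ makes $\sgn u$ a solution to \eqref{eq:hcsgn}, and Lemma~\ref{lem:hcsgn-s} ensures it is $|2p|$-periodic for any admissible $p\in T(a,\epsilon)$, hence almost periodic.

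For $|u|$, Lemma~\ref{lem:hc-aux}(i) makes $|u|$ a bounded positive solution of $x(n)=|a(n)|x(n-1)$ with positive infimum. Setting $w(n):=\log|u(n)|$ yields a bounded sequence satisfying the additive recursion
\begin{equation*}
  w(n)-w(n-1)=\log|a(n)|,
\end{equation*}
whose right-hand side is almost periodic because $\log$ is uniformly continuous on the compact interval $[\vartheta,\|a\|]$ that contains all values of $|a(n)|$. Invoking the discrete Bohl--Bohr theorem — a bounded sequence whose first difference is almost periodic is itself almost periodic — identifies $w$ as almost periodic; then $|u|=\exp w$ is almost periodic by the uniform continuity of $\exp$ on bounded intervals.

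The principal obstacle is the discrete Bohl--Bohr step. It forces the mean of $\log|a|$ to vanish (otherwise $w$ could not be bounded) and then requires a compactness/approximation argument to upgrade almost periods of $\log|a|$ into almost periods of $w$. If a direct citation is unavailable, this step must be proved from scratch; the rest of the argument is a structural assembly of the lemmas at hand, combined with the elementary fact that $AP(\mathbb{Z},\mathbb{R})$ is closed under products of bounded sequences.
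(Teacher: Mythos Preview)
Your proof is correct and follows essentially the same route as the paper: both directions match the paper's argument, using Lemma~\ref{lem:notAA} for necessity and, for sufficiency, the decomposition $u=|u|\cdot\sgn u$ together with Lemma~\ref{lem:hcsgn-s} for the sign factor and passage to logarithms plus the discrete Bohl--Bohr theorem for the modulus factor. The only cosmetic difference is that the paper writes the sign factor as $\sgn V(n,0)\cdot\sgn u(0)$ rather than $\sgn u(n)$, and it invokes the Bohl--Bohr step without naming it, whereas you flag it explicitly as the one nontrivial external input.
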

\begin{proof}
Necessity. Suppose that $u\in AP(\mathbb{Z}, \mathbb{R})$ is a nontrivial solution to \eqref{eq:hc}. Clear, $u$ is bounded on $\mathbb{Z}$. From \Cref{lem:notAA} it follows that $\inf_{n\in \mathbb{Z}} |u(n)|>0$.

Sufficiency. Let $u$ be a bounded solution to \eqref{eq:hc} such that $\inf_{n\in \mathbb{Z}} |u(n)|>0$. Then $\{|u(n)|\}_{n\in \mathbb{Z}}$ and $\{\ln |u(n)|\}_{n\in \mathbb{Z}}$ are respectively bounded solutions to \eqref{eq:hc+} and the auxiliary equation
\begin{equation}\label{eq:hcln}
  x(n)-x(n-1)=\ln |a(n)|, \quad n\in \mathbb{Z}.
\end{equation}
Because $\{\ln |a(n)|\}_{n\in \mathbb{Z}}\in AP(\mathbb{Z}, \mathbb{R})$, $\{\ln |u(n)|\}_{n\in \mathbb{Z}}$ is an almost periodic solution to \eqref{eq:hcln}. Therefore,
\begin{equation*}
  \big\{|u(n)|=e^{\ln |u(n)|}\big\}_{n\in \mathbb{Z}}
\end{equation*}
is an almost periodic solution to \eqref{eq:hc+}. By \Cref{lem:hcsgn-s}, $\{\sgn V(n, 0)\}_{n\in \mathbb{Z}}$ is a periodic solution to \eqref{eq:hcsgn}. A direct calculation shows that
\begin{align*}
u(n)&=V(n, 0)u(0)\\
&=|V(n, 0)|\cdot |u(0)|\cdot \sgn[V(n, 0)u(0)]\\
&=|u(n)|\cdot \sgn V(n, 0)\cdot \sgn u(0)
\end{align*}
for all $n\in \mathbb{Z}$. Thus $u\in AP(\mathbb{Z}, \mathbb{R})$.
\end{proof}

\begin{remark}\label{re:hc-ap}
Under the conditions of \Cref{th:hc-ap}, \eqref{eq:hc} has a nontrivial solution $u$ with $\inf_{n\in \mathbb{Z}} |u(n)|=0$ if and only if $\{\ln |u(n)|\}_{n\in \mathbb{Z}}$ is a solution to \eqref{eq:hcln} such that $\inf_{n\in \mathbb{Z}} \ln |u(n)|=-\infty$, which may be true in many situations.
\end{remark}

\section{The factorization theorem}\label{sec:factor}

In this section, we prove the main result on the factorization of p.c.a.p. functions into Bohr almost periodic functions and sequences, and show the almost periodicity of the functions in \eqref{eq:change2}.

\begin{proof}[Proof of \Cref{th:factor1}]
If $a(n):=h(\tau_n^+)/h(\tau_n)$ for $n\in \mathbb{Z}$, then $a\in AP(\mathbb{Z}, \mathbb{R})$ and $\inf_{n\in \mathbb{Z}} |a(n)|>0$ by (iii) of \Cref{lem:pcap}. \Cref{th:hc-ap} implies that the bounded solution $v$ with $\inf_{n\in \mathbb{Z}} |v(n)|>0$ to \eqref{eq:factor-c} is almost periodic. Define
\begin{align*}
&w(n)=\frac{1}{v(n)}, \quad n\in \mathbb{Z},\\
&f(t)=h(t)w(n), \quad \tau_n<t\leq \tau_{n+1}, n\in \mathbb{Z}.
\end{align*}
Then $w\in AP(\mathbb{Z}, \mathbb{R})$. We shall show $f\in AP(\mathbb{R}, \mathbb{R})$. The proof of this statement is divided into three steps.

1. We prove that $f$ is continuous on $\mathbb{R}$. Since $v$ is a solution to \eqref{eq:factor-c}, a direct calculation shows that
\begin{equation*}
  f(\tau_n^+)=\frac{h(\tau_n^+)}{v(n)}=\frac{h(\tau_n)}{v(n-1)}=f(\tau_n)
\end{equation*}
for all $n\in \mathbb{Z}$.

2. We prove that $f$ is uniformly continuous on $\mathbb{R}$, a property which will be used later. Given $\epsilon'>0$, let $\theta=\inf_{j\in \mathbb{Z}} \tau_j^1$ and $\delta$, $0<\delta<\theta$ be a number with $|h(s)-h(t)|<\epsilon'/\|w\|$ whenever $s$, $t\in (\tau_j, \tau_{j+1}]$ for some $j\in \mathbb{Z}$ and $|s-t|<\delta$ by (ii) of \Cref{df:pcap}. We shall consider two cases of the points in $\mathbb{R}$.

Case 1. $s$, $t\in (\tau_m, \tau_{m+1}]$ for some $m\in \mathbb{Z}$ and $|s-t|<\delta$. It is easy to see that
\begin{equation*}
|f(s)-f(t)|=|[h(s)-h(t)]w(m)|<\epsilon'.
\end{equation*}

Case 2. $s\leq \tau_m<t$ for some $m\in \mathbb{Z}$ and $|s-t|<\delta$. Since $\delta<\theta$, $s>\tau_{m-1}$ and $t<\tau_{m+1}$. From the continuity of $f$ at $\tau_m$ and Case 1 it follows that
\begin{equation*}
|f(s)-f(t)|\leq |f(s)-f(\tau_m)|+|f(\tau_m^+)-f(t)|<2\epsilon'.
\end{equation*}

In all, $|f(s)-f(t)|<2\epsilon'$ whenever $|s-t|<\delta$.

3. Let $\epsilon'$ and $\delta$ be the same as in Step 2. We prove that $f\in AP(\mathbb{R}, \mathbb{R})$ by showing the set $T(f, 5\epsilon')$ to be relatively dense. Choose $\epsilon$ so small that
\begin{equation*}
  0<\epsilon<\min\Big\{\frac{\epsilon'}{\|w\|+\|h\|}, \frac{\delta}{2}\Big\},
\end{equation*}
where $\|h\|:=\sup_{t\in \mathbb{R}} |h(t)|<\infty$. Consider the following inequalities
\begin{align}
&|h(t+r)-h(t)|<\epsilon, \quad t\in \mathbb{R}, |t-\tau_j|>\epsilon, j\in \mathbb{Z},\label{eq:factor-4}\\
&|w(n+q)-w(n)|<\epsilon, \quad n\in \mathbb{Z},\label{eq:factor-5}\\
&|h(\tau_{n+q})-h(\tau_n)|<\epsilon, \quad n\in \mathbb{Z}\label{eq:factor-6},\\
&|\tau_j^q-r|<\epsilon, \quad j\in \mathbb{Z}.\label{eq:factor-3}
\end{align}
Using the method of common almost periods as in Lemma 35 in \cite[p. 208]{SP95}, the following two sets
\begin{align*}
&\Gamma_1=\{r\in \mathbb{R}; \text{ there exists } q\in \mathbb{Z} \text{ such that } (r, q) \text{ satisfies \eqref{eq:factor-4}-\eqref{eq:factor-3}}\},\\
&Q_1=\{q\in \mathbb{Z}; \text{ there exists } r\in \mathbb{R} \text{ such that } (r, q) \text{ satisfies \eqref{eq:factor-4}-\eqref{eq:factor-3}}\}
\end{align*}
are relatively dense. Let $(r, q)\in \Gamma\times Q$ satisfy \eqref{eq:factor-4}-\eqref{eq:factor-3}. We shall study two possible cases.

Case 1. $\tau_m+\epsilon<t<\tau_{m+1}-\epsilon$ for some $m\in \mathbb{Z}$. From \eqref{eq:factor-3} it follows that
\begin{align*}
&\tau_{m+q}-\tau_m-\epsilon<r<\tau_{m+1+q}-\tau_{m+1}+\epsilon,\\
&\tau_{m+q}<t+r<\tau_{m+q+1}.
\end{align*}
Consequently,
\begin{align*}
|f(t+r)-f(t)|&=|h(t+r)w(m+q)-h(t)w(m)|\\
&\leq |[h(t+r)-h(t)]w(m+q)|+|h(t)[w(m+q)-w(m)]|\\
&<(\|w\|+\|h\|)\cdot \epsilon<5\epsilon'.
\end{align*}

Case 2. $\tau_m-\epsilon\leq t\leq \tau_m+\epsilon$ for some $m\in \mathbb{Z}$. From \eqref{eq:factor-3} it follows that
\begin{align*}
&\tau_{m+q}-\tau_m-\epsilon<r<\tau_{m+q}-\tau_m+\epsilon,\\
&\tau_{m+q}-2\epsilon<t+r<\tau_{m+q}+2\epsilon.
\end{align*}
Hence $|t+r-\tau_{m+q}|<2\epsilon<\delta$. Therefore,
\begin{align*}
|f(t+r)-f(t)|&\leq |f(t+r)-f(\tau_{m+q})|+|f(\tau_{m+q})-f(\tau_m)|+|f(\tau_m)-f(t)|\\
&<2\epsilon'+|h(\tau_{m+q})w(m+q-1)-h(\tau_m)w(m-1)|+2\epsilon'\\
&<4\epsilon'+|[h(\tau_{m+q})-h(\tau_m)]w(m+q-1)|\\
&+|h(\tau_m)[w(m+q-1)-w(m-1)]|\\
&<4\epsilon'+(\|w\|+\|h\|)\cdot \epsilon<5\epsilon'
\end{align*}
by the uniform continuity of $f$ and \eqref{eq:factor-5}, \eqref{eq:factor-6}.

Summing up, $T(f, 5\epsilon')$ contains the relatively dense set $\Gamma$ and $f\in AP(\mathbb{R}, \mathbb{R})$.

At last, the uniqueness of $f$ and $v$ up to a nonzero multiplicative constant follows from \Cref{lem:hc-s}.
\end{proof}

\begin{remark}\label{re:factor}
From the proof above it follows that \Cref{th:factor1} remains true if \eqref{eq:factor-h} is replaced by
\begin{equation}\label{eq:factor-h2}
  \Big\{\frac{h(\tau_n^+)}{h(\tau_n)}\Big\}_{n\in \mathbb{Z}}\in AP(\mathbb{Z}, \mathbb{R}), \quad \inf_{n\in \mathbb{Z}} \Big|\frac{h(\tau_n^+)}{h(\tau_n)}\Big|>0.
\end{equation}
Note that \eqref{eq:factor-h} is proposed to generate the sequence in \eqref{eq:factor-h2}. This remark will be referenced in the applications of impulsive differential equations.
\end{remark}

\begin{proof}[Proof of \Cref{th:factor2}]
Let $h$ be given by \eqref{eq:factor}. The proof is divided into three steps.

1. We prove that $h$ satisfies (ii) of \Cref{df:pcap}. Since $f$ is uniformly continuous on $\mathbb{R}$, for every $\epsilon>0$ there is a $\delta=\delta(\epsilon)>0$ such that $|f(s)-f(t)|<\epsilon$ for all $s$, $t\in \mathbb{R}$ satisfying $|s-t|<\delta$. A straightforward computation shows that
\begin{equation*}
|h(s)-h(t)|=|f(s)u(j)-f(t)u(j)|\leq \|u\|\cdot \epsilon
\end{equation*}
whenever $s$, $t\in (\tau_j, \tau_{j+1}]$ for some $j\in \mathbb{Z}$ and $|s-t|<\delta$.

2. We prove that $h$ satisfies (iii) of \Cref{df:pcap} by showing the set $T(h, \epsilon')$ to be relatively dense, where $\epsilon'=(\|u\|+\|f\|)\cdot \epsilon$. Consider \eqref{eq:factor-3} the following inequalities in $(r, q)\in \mathbb{R}\times \mathbb{Z}$,
\begin{align}
&|f(t+r)-f(t)|<\epsilon, \quad t\in \mathbb{R},\label{eq:factor-1}\\
&|u(n+q)-u(n)|<\epsilon, \quad n\in \mathbb{Z},\label{eq:factor-2}
\end{align}
Using the method of common almost periods as in Lemma 35 in \cite[p. 208]{SP95}, the following two sets
\begin{align*}
&\Gamma=\{r\in \mathbb{R}; \text{ there exists } q\in \mathbb{Z} \text{ such that } (r, q) \text{ satisfies \eqref{eq:factor-3}, \eqref{eq:factor-1} and \eqref{eq:factor-2}}\},\\
&Q=\{q\in \mathbb{Z}; \text{ there exists } r\in \mathbb{R} \text{ such that } (r, q) \text{ satisfies \eqref{eq:factor-3}, \eqref{eq:factor-1} and \eqref{eq:factor-2}}\}
\end{align*}
are relatively dense. Let $(r, q)\in \Gamma\times Q$ satisfy \eqref{eq:factor-3}, \eqref{eq:factor-1} and \eqref{eq:factor-2}. If $\tau_m+\epsilon<t<\tau_{m+1}-\epsilon$ for some $m\in \mathbb{Z}$, from \eqref{eq:factor-3} it follows that
\begin{align*}
&\tau_{m+q}-\tau_m-\epsilon<r<\tau_{m+1+q}-\tau_{m+1}+\epsilon,\\
&\tau_{m+q}<t+r<\tau_{m+q+1}.
\end{align*}
A direct calculation shows that
\begin{align*}
|h(t+r)-h(t)|&=|f(t+r)u(m+q)-f(t)u(m)|\\
&\leq|[f(t+r)-f(t)]u(m+q)|+|f(t)[u(m+q)-u(m)]|\\
&<(\|u\|+\|f\|)\cdot \epsilon=\epsilon'.
\end{align*}
Hence $T(h, \epsilon')$ contains the relatively dense set $\Gamma$ and $h$ is p.c.a.p.

3. In this case, \eqref{eq:factor-c} takes the form of
\begin{equation*}
  x(n)=\frac{f(\tau_n)u(n)}{f(\tau_n)u(n-1)}\cdot x(n-1)=\frac{u(n)}{u(n-1)}\cdot x(n-1).
\end{equation*}
By \eqref{eq:factor-h}, it is obvious that $u$ is a nontrivial almost periodic solution to \eqref{eq:factor-c}. So \Cref{th:hc-ap} implies $\inf_{n\in \mathbb{Z}} |u(n)|>0$.
\end{proof}

At last, let us show the almost periodicity of the functions in \eqref{eq:change2}.
\begin{proof}[Proof of \Cref{th:change}]
It is easy to check that $(\phi_1, \ldots, \phi_d)$ is a solution to \eqref{eq:IDE2} if and only if $(\phi_1/\psi_1, \ldots, \phi_d/\psi_d)$ is a solution to \eqref{eq:DE2}. Next we prove the almost periodicity.

Let $(\phi_1, \ldots, \phi_d)$ be a p.c.a.p. solution to \eqref{eq:IDE2} such that $\phi_i(\tau_n)\neq 0$ for all $n\in \mathbb{Z}$ and $i=1$, $\ldots$, $d$. From
\begin{equation*}
  \frac{\phi_i(\tau_n^+)}{\phi_i(\tau_n)}=\frac{\psi_i(\tau_n^+)}{\psi_i(\tau_n)}=1+b_i(n), \quad n\in \mathbb{Z}.
\end{equation*}
it follows that \eqref{eq:factor-c} takes the form of \eqref{eq:CE1} for $i=1$, $\ldots$, $d$. Therefore, the sequences $\{u_i\}_{i=1}^d$ defined by \eqref{eq:seq} are almost periodic by (A2) and \Cref{th:hc-ap}, and the functions $\{\psi_i\}_{i=1}^d$ given by \eqref{eq:fct} are p.c.a.p. by \Cref{th:factor2} with $f=1$. Since $u_i$ is a nontrivial solution to \eqref{eq:CE1} and
\begin{equation}\label{eq:change3}
  \phi_i(t)=\frac{\phi_i(t)}{\psi_i(t)}\cdot u(n), \quad \tau_n<t\leq \tau_{n+1}, n\in \mathbb{Z},
\end{equation}
\Cref{re:factor} implies that $\phi_i/\psi_i$ is the unique Bohr almost periodic function such that \eqref{eq:factor} holds for $\phi_i$, $\phi_i/\psi_i$ and $u_i$.

The second conclusion follows from \eqref{eq:change3} and \Cref{th:factor2}.
\end{proof}

\section*{Acknowledgments}

The author R. Yuan is supported by The National Nature Science Foundation of China (Grant No. 11771044).

\end{document}